\documentclass[letter,11pt]{amsart}

\usepackage{amsmath,amsthm,amssymb,mathtools,thmtools,mathabx}
\usepackage{cite}
\usepackage{upgreek}
\usepackage[bookmarks=false]{hyperref}
\usepackage{enumitem}
\usepackage[margin=0.9in]{geometry}

\theoremstyle{plain}
\newtheorem{theorem}{Theorem}
\newtheorem{lemma}[theorem]{Lemma}

\newtheorem{corollary}[theorem]{Corollary}
\theoremstyle{definition}
\newtheorem{definition}[theorem]{Definition}
\newtheorem{remark}[theorem]{Remark}

\newcommand{\cC}{\mathcal{C}}

\newcommand{\bF}{\mathbb{F}}
\newcommand{\cG}{\mathcal{G}}

\newcommand{\cJ}{\mathcal{J}}

\newcommand{\N}{\mathbb{N}}

\newcommand{\cU}{\mathcal{U}}

\newcommand{\Z}{\mathbb{Z}}

\newcommand{\eps}{\varepsilon}
\newcommand{\vphi}{\varphi}

\newcommand{\norm}[1]{\left\|#1\right\|}
\newcommand{\abs}[1]{\left|#1\right|}

\newcommand{\ot}{\otimes}

\DeclareSymbolFont{Symbols}{OMS}{cmsy}{m}{n}
\DeclareMathSymbol{\Emptyset}{\mathord}{Symbols}{"3B}

\begin{document}

\title{Hilbert--Schmidt stability for graph products}

\begin{abstract}
    In this short note we prove Hilbert--Schmidt stability for graph products of abelian groups and $C^*$-algebras on chordal graphs. In particular, this shows that right-angled Artin groups on chordal graphs are Hilbert--Schmidt stable.
\end{abstract}

\author{Pieter Spaas}\thanks{The author was partially supported by the European Union via an ERC grant (QInteract, Grant No 101078107).}
\address{Centre for the Mathematics of Quantum Theory, Department of Mathematical Sciences, University of Copenhagen, Universitetsparken 5, DK-2100 Copenhagen \O, Denmark}
\email{pisp@math.ku.dk}

\
\vspace{-20pt}

\maketitle

\section*{Introduction}

Stability, which in general asks the question whether elements that ``almost'' satisfy a certain equation are ``close'' to elements that exactly satisfy it, has a long history and traces back to the natural question whether two approximately commuting matrices (in some norm) are close to a pair of commuting matrices; see, for instance, the introduction to \cite{Io21} and the references therein. 
In this note, we are mainly interested in \textit{HS-stability} for groups, where a group is called HS-stable if approximate homomorphisms into unitary groups with the normalized Hilbert--Schmidt norm are necessarily close to genuine homomorphisms (see Definition~\ref{def:HSstabGrp}). HS-stability for amenable groups, while still presenting many challenges, has seen considerable progress in recent years, see for instance \cite{HS17, ES22, LV22}. Further, it is known that HS-stability is preserved under free products as well as direct products with amenable HS-stable groups \cite{IS19}. However, beyond groups obtained from amenable HS-stable groups through these constructions, very few examples of nonamenable HS-stable groups are known. Some examples are certain one-relator groups with non-trivial center \cite{HS17}, certain specific graph products \cite{At18}, and virtually free groups \cite{GS21}. 

This note adds to these examples by showing that all right-angled Artin groups on chordal graphs are HS-stable, and more generally, that graph products of abelian groups on chordal graphs are HS-stable; see Corollary~\ref{cor:stabchordgrp}. In fact, one can allow some vertex groups to be HS-stable but not necessarily abelian; we refer to Theorem~\ref{thm:general} below for the most general statement. The latter theorem, as well as our main Theorem~\ref{thm:stabchord}, are phrased in the language of $C^*$-algebras, and they yield the aforementioned result on groups by passing to the full group $C^*$-algebra.

Finally, we also refer the reader to \cite{FW26}, where the authors obtain many new examples of groups with the (local) lifting property and the (R)FD property, and connect it to so-called very flexible HS-stability. Their results include very flexible HS-stability for right-angled Artin groups on chordal graphs; in particular, with a completely different proof, we strengthen this to HS-stability.

\section*{Preliminaries}

\subsection*{Stability} In the following, we denote by $U(n)$ the unitary group of dimension $n$, and by $\norm{\cdot}_2$ the normalized Hilbert--Schmidt norm.

\begin{definition}\label{def:HSstabGrp}
    Let $\Gamma$ be a countable discrete group. A sequence of maps $\vphi_n:\Gamma\to U(k_n)$ is called an \textit{approximate homomorphism} if $\lim_{n\to\infty}\norm{\vphi_n(gh) - \vphi_n(g)\vphi_n(h)}_2 = 0$ for all $g,h\in\Gamma$. We say $\Gamma$ is \textit{Hilbert--Schmidt stable} (or \textit{HS-stable}) if for every such asymptotic homomorphism, there exists a sequence of homomorphisms $\psi_n:\Gamma\to U(k_n)$ such that $\lim_{n\to\infty} \norm{\vphi_n(g) - \psi_n(g)}_2 = 0$ for all $g\in \Gamma$.
\end{definition}

For our main result Theorem~\ref{thm:stabchord} below, we will use the fact that stability can conveniently be described in the language of ultraproducts. In this note, when taking an ultraproduct of tracial $C^*$-algebras (e.g., matrix algebras with their normalized trace), we always mean the \textit{tracial ultraproduct}: for tracial $C^*$-algebras $(A_i,\tau_i)_{i\in I}$ and an ultrafilter $\cU$ on $I$, the tracial ultraproduct $\prod_\cU (A_i, \tau_i)$ is the quotient of the $C^*$-product $\prod_{i\in I} A_i$ by the ideal $\cJ = \{(a_i)_i\in \prod_{i\in I} A_i: \lim_{\cU} \tau_i(a_i^*a_i) = 0\}$. 

When the tracial states are implicit or clear from context, we will just write $\prod_\cU A_i$. We further define the norms $\norm{x}_{2,i}^2 = \tau_i(x^*x)$ for $x\in A_i$ and $\norm{x}_{2,\cU} = \lim_\cU \norm{x_i}_{2,i}$ for $x = (x_i)_i\in \prod_\cU A_i$. 

\begin{definition}
	Let $A$ be a $C^*$-algebra and $(A_i,\tau_i)_{i\in I}$ be a family of $C^*$-algebras equipped with tracial states. Let $\cU$ be an ultrafilter on $I$. We say a $^*$-homomorphism $\theta:A\to\prod_\cU A_i$ \textit{lifts} if there exist $^*$-homomorphisms $\theta_i:A\to A_i$ for every $i\in I$ such that $\theta = (\theta_i)_{i\in \cU}$, i.e., for every $a\in A$ we have $\theta(a) = (\theta_i(a))_{i\in \cU}$.
\end{definition}

\begin{definition}\label{def:tracialstab}
	Given a class $\cC$ of tracial $C^*$-algebras, we say a $C^*$-algebra $A$ is \textit{$\cC$-tracially stable} if every $^*$-homomorphism $\theta:A\to\prod_\cU A_i$, with every $A_i\in\cC$, lifts. In the case where $\cC$ consists of all matrix algebras equipped with their canonical normalized traces, we also refer to $\cC$-tracial stability as \textit{Hilbert--Schmidt stability} (or \textit{HS-stability}).
\end{definition}

Tracial stability for $C^*$-algebras as defined above can be interpreted in terms of $\eps$'s, $\delta$'s, and approximate $^*$-homomorphisms, see for instance \cite[Section~2]{HS16}. Moreover, when considering the full $C^*$-algebra $C^*(\Gamma)$ of a countable discrete group $\Gamma$, it is easy to check that HS-stability of $\Gamma$ as defined in Definition~\ref{def:HSstabGrp} is equivalent to HS-stability of $C^*(\Gamma)$ as defined in Definition~\ref{def:tracialstab}. Furthermore, by writing out concretely the definitions of tracial ultraproducts and lifting of $^*$-homomorphisms, one can easily prove the following criterion to check whether a $^*$-homomorphism from a $C^*$-algebra into an ultraproduct lifts.

\begin{lemma}[\!\!\!{\cite[Lemma~2.2]{HS16}}]\label{lem:HS}
	Suppose $A$ is generated as a $C^*$-algebra by elements $\{b_1, b_2, \ldots\}$. Let $(A_i,\tau_i)_{i\in I}$ be a family of tracial $C^*$-algebras, let $\cU$ be an ultrafilter on $I$, and let $\theta:A\to\prod_\cU (A_i,\tau_i)$ be a $^*$-homomorphism. Then the following are equivalent:
	\begin{enumerate}[label=(\roman*),nolistsep]
		\item $\theta$ lifts.
		\item For every $\eps>0$ and every $n\in\N$, there are $^*$-homomorphisms $\theta_i:A\to A_i$ for all $i\in I$ such that, for every $1\leq k\leq n$,
		\[
		\norm{(\theta_i(b_k))_\cU - \theta(b_k)}_{2,\cU} <\eps.
		\]
	\end{enumerate}
\end{lemma}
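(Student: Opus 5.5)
The implication (i)$\Rightarrow$(ii) is immediate: if $\theta = (\theta_i)_\cU$ for $^*$-homomorphisms $\theta_i:A\to A_i$, then the same family $\theta_i$ works for every $\eps>0$ and $n\in\N$, since then $(\theta_i(b_k))_\cU - \theta(b_k) = 0$. The real content is (ii)$\Rightarrow$(i), which I would prove by a diagonal argument along the ultrafilter.

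First choose representatives $\theta(b_k) = (x_{k,i})_\cU$ with $x_{k,i}\in A_i$. For each $m\in\N$, apply (ii) with $\eps = 1/m$ and $n=m$ to obtain $^*$-homomorphisms $\theta^{(m)}_i:A\to A_i$ such that
\[
\lim_\cU \norm{\theta^{(m)}_i(b_k) - x_{k,i}}_{2,i} < 1/m \quad \text{for all } 1\le k\le m.
\]
Then set $I_m = \{ i\in I : \norm{\theta^{(j)}_i(b_k) - x_{k,i}}_{2,i} < 1/j \text{ for all } 1\le k\le j\le m\}$. Each $I_m$ lies in $\cU$ as a finite intersection of sets in $\cU$, and $I_1\supseteq I_2\supseteq\cdots$. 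We may intersect with $I_0 = I$. For $i\in I\setminus I_1$ let $\theta_i$ be arbitrary (e.g.\ $\theta^{(1)}_i$). For $i\in I_m\setminus I_{m+1}$ set $\theta_i = \theta^{(m)}_i$. For $i\in\bigcap_m I_m$, which may be nonempty if $\cU$ is principal or the intersection lies in $\cU$, set $\theta_i=\theta^{(m)}_i$ for a suitable large $m$; in that case the estimates hold for all $m$ simultaneously, so the limiting bound is still fine. Equivalently, one can define $m(i) = \sup\{m : i\in I_m\}$ and handle $m(i)=\infty$ by choosing any $m$ with the estimate, since then all $m$ work.

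To check the lift, fix $k$ and $\delta>0$ and pick $M\ge k$ with $1/M<\delta$. For $i\in I_M$ we have $m(i)\ge M\ge k$, so $\norm{\theta_i(b_k)-x_{k,i}}_{2,i} < 1/m(i) \le \delta$. Hence $\norm{(\theta_i(b_k))_\cU - \theta(b_k)}_{2,\cU}\le\delta$ for all $\delta$, so $(\theta_i)_\cU$ agrees with $\theta$ on every generator $b_k$. Now $(\theta_i)_\cU : A\to\prod_\cU A_i$ is a $^*$-homomorphism, since the quotient map is a $^*$-homomorphism and $\prod_i\theta_i$ is one into the $C^*$-product. Two $^*$-homomorphisms agreeing on a generating set agree on the generated $^*$-algebra, and by norm continuity they agree on $A$. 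Thus $\theta$ lifts.

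The main point needing care is the diagonalization when $\bigcap_m I_m$ is nonempty. This is resolved by the remark that for such $i$ the defining inequalities hold for every $m$, so any choice $\theta^{(m)}_i$ works for the relevant finitely many generators. To make it uniform, one takes $\theta_i = \theta^{(1)}_i$ there, and the set where this matters is handled by noting that if $\bigcap_m I_m\in\cU$, then there we need $x_{k,i}=\theta^{(j)}_i(b_k)$ exactly in the limit, which indeed follows from the inequalities for all $j$ after first passing to $\theta^{(j)}_i$ with $j\ge k$. Everything else is routine bookkeeping.
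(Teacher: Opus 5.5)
The paper gives no proof of this lemma (it is quoted from \cite{HS16}), and a diagonal argument along $\cU$ is the natural route. Your (i)$\Rightarrow$(ii) is fine. In (ii)$\Rightarrow$(i), however, there is a genuine gap exactly at the point you flag, and your resolution does not work. Put $J=\bigcap_m I_m$. If $J\notin\cU$, the values of $\theta_i$ on $J$ are irrelevant and your estimate via $m(i)$ is correct. But $J\in\cU$ can happen: always when $\cU$ is principal, and also for free ultrafilters if, e.g., the $I_m$ stabilize. Then $(\theta_i)_\cU$ is determined by the choices on $J$. Choosing $\theta_i=\theta^{(m)}_i$ there controls only $b_1,\ldots,b_m$, and only to within $1/m$. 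To get $\lim_\cU\norm{\theta_i(b_k)-x_{k,i}}_{2,i}=0$ for every $k$ you would need $m=m(i)$ with $\lim_\cU m(i)=\infty$ on $J$, which is impossible, e.g., for principal $\cU$. Your last paragraph instead passes to $\theta^{(j)}_i$ with $j\geq k$, i.e.\ lets the homomorphism depend on $k$. That is not allowed, since $\theta_i$ must be a single $^*$-homomorphism for all generators. For $i\in J$ the inequalities only say that $x_{k,i}=\lim_{j\to\infty}\theta^{(j)}_i(b_k)$ in $\norm{\cdot}_{2,i}$. A pointwise $2$-norm limit of $^*$-homomorphisms need not be a $^*$-homomorphism, even modulo the trace-kernel.

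Closing the gap requires an extra hypothesis. If $\cU$ is countably incomplete (e.g.\ a free ultrafilter on $\N$), choose $D_1\supseteq D_2\supseteq\cdots$ in $\cU$ with $\bigcap_m D_m=\emptyset$ (e.g.\ $D_m=\{m,m+1,\ldots\}$) and replace $I_m$ by $I_m\cap D_m$. Then $m(i)<\infty$ for every $i$, and the rest of your argument goes through verbatim. Alternatively, suppose every $\tau_i$ is faithful (e.g.\ matrix algebras, the case of the main theorem). Then for $i\in J$ you may define $\theta_i$ by $b_k\mapsto x_{k,i}$. Indeed, for a faithful trace a $\norm{\cdot}_2$-limit of elements of norm at most $C$ has norm at most $C$. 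Hence $\norm{p(x_{1,i},x_{2,i},\ldots)}\leq\norm{p(b_1,b_2,\ldots)}$ for every $^*$-polynomial $p$, and the assignment extends to a $^*$-homomorphism. Without such a hypothesis, (ii)$\Rightarrow$(i) is actually false. Take $\cU$ principal at $i_0$ and $A_{i_0}=C(D^2)$ with $\tau$ the normalized arc-length integral over $\partial D^2$, so that $\prod_\cU A_i\cong C(S^1)$ via restriction, and let $\theta=\mathrm{id}$ on $A=C(S^1)=C^*(z)$. Consider maps $D^2\to S^1$ whose boundary values have degree $0$ and agree with the identity off an arc of length $\delta$. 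Composing with them gives approximate lifts with $2$-norm error $O(\sqrt{\delta})$. An exact lift, however, would be unital (since $D^2$ is connected) and would send $z$ to a unitary $u\in C(D^2)$ with $u|_{\partial D^2}=z$, i.e.\ a retraction $D^2\to S^1$, which does not exist.
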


\subsection*{Graph products} Motivated by results of \cite{At18}, we study stability of \textit{graph products} of groups, first introduced and studied in general in \cite{Gr90}. In this note, all our graphs will be finite and simple, i.e., there are no loops or multiple edges. For completeness, we include the definition of a graph product group:

\begin{definition}
    Let $\cG = (V,E)$ be a graph, and for each vertex $v\in V$, let $\Gamma_v$ be a countable discrete group. The \textit{graph product group} $\Asterisk_\cG\, \Gamma_v$ is defined as the quotient of the free product $\ast_{v\in V} \Gamma_v$ by the normal subgroup generated by $\{ghg^{-1}h^{-1}: g\in \Gamma_v, h\in \Gamma_w, (v,w)\in E\}$. 
\end{definition}

In other words, the subgroups $\Gamma_v$ and $\Gamma_w$ of the graph product commute if and only if $v$ and $w$ are adjacent in $\cG$. Particular examples of interest, with applications in various fields across mathematics, are right-angled Artin groups and right-angled Coxeter groups, which arise as graph products where all vertex groups are $\Z$ and $\Z/2\Z$, respectively.

For $C^*$-algebras, the (full) graph product is defined similarly using a universal property:

\begin{definition}\label{def:Cgraphprod}
    Let $\cG = (V,E)$ be a graph, and for each vertex $v\in V$, let $A_v$ be a unital $C^*$-algebra. The \textit{graph product $C^*$-algebra} $\Asterisk_\cG A_v$ is defined as the unique unital $C^*$-algebra $\Asterisk_\cG A_v$ together with unital $^*$-homomorphisms $\iota_v: A_v \to \Asterisk_{\cG} A_v$ satisfying:
    \begin{itemize}[nolistsep]
        \item $[\iota_v(a), \iota_w(b)] = 0$ whenever $a\in A_v$, $b\in A_w$, $(v,w)\in E$;
        \item For any unital $C^*$-algebra $B$ together with $^*$-homomorphisms $\{\pi_v:A_v\to B\}_{v\in V}$ such that $[\pi_v(a),\pi_w(b)]=0$ whenever $a\in A_v$, $b\in A_w$, and $(v,w)\in E$, there exists a unique $^*$-homomorphism $\Asterisk_\cG\, \pi_v: \Asterisk_\cG A_v \to B$ such that $\Asterisk_\cG \, \pi_v \circ \iota_v = \pi_v$ for every $v\in V$.
    \end{itemize}
\end{definition}

In our main result Theorem~\ref{thm:stabchord} below, we will consider graph products on \textit{chordal graphs}: A finite simple graph is called chordal if every cycle of length at least $4$ has a chord, i.e., an edge which is not part of the cycle connecting two of its vertices. In other words, there are no induced subgraphs which are cycles of length at least $4$. 

Chordal graphs are a well-studied class of graphs. One characterization of chordal graphs that will be useful to us, is the following well-known fact.

\begin{lemma}[Inductive Construction of Chordal Graphs]\label{lem:chordal} 
    Chordal graphs $\cG$ (with $n$ vertices) are exactly the graphs that can be built in the following inductive way.
    \begin{enumerate}[leftmargin=*]
        \item Start with the trivial graph $\cG_1$ with one vertex and no edges.
        \item For $2\leq k\leq n$: Choose a complete (possibly empty) subgraph of $\cG_{k-1}$. To obtain $\cG_k$, add one vertex, and connect it with an edge to every vertex of the chosen complete subgraph of $\cG_{k-1}$.
        \item Put $\cG\coloneqq\cG_n$.
    \end{enumerate}
\end{lemma}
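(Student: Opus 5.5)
The statement to prove is Lemma~\ref{lem:chordal}: chordal graphs are exactly those obtained by repeatedly adding a vertex adjacent to a clique. I would prove the two directions separately. The easy direction is that every graph built this way is chordal, and I would argue by induction on $k$. Suppose $\cG_{k-1}$ is chordal and $\cG_k$ arises by adding a vertex $x$ whose neighbourhood $N(x)$ is a clique. Take an induced cycle $C$ in $\cG_k$ of length at least $4$. If $C$ avoids $x$, then $C$ is an induced cycle in $\cG_{k-1}$, which contradicts the induction hypothesis. Otherwise $x$ has two neighbours on $C$, and these lie in the clique $N(x)$. They are therefore adjacent, and since $|C|\geq 4$ they are not consecutive on $C$, so their edge is a chord. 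This contradicts $C$ being induced.

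For the converse I would use the classical fact (Dirac's lemma) that every chordal graph has a \emph{simplicial vertex}, meaning a vertex whose neighbourhood is a clique. Given this, the construction follows by induction on $n$. Pick a simplicial vertex $x$ of $\cG$. The graph $\cG - x$ is chordal, since induced subgraphs of chordal graphs are chordal. By induction, $\cG - x$ is built as $\cG_1,\dots,\cG_{n-1}$. Setting $\cG_n=\cG$ is then one more step of the construction, using the clique $N(x)$.

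The main obstacle is proving the existence of a simplicial vertex. I would prove the stronger statement: every chordal graph is either complete or has two non-adjacent simplicial vertices. The argument is by induction on the number of vertices. If $\cG$ is complete, every vertex is simplicial. Otherwise choose non-adjacent vertices $a,b$ and a minimal $a$–$b$ separator $S$. Let $A$ and $B$ be the components of $\cG - S$ containing $a$ and $b$.

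The key claim is that $S$ is a clique. Suppose $s,t\in S$ are distinct. By minimality of $S$, each of $s$ and $t$ has a neighbour in both $A$ and $B$. Take a shortest $s$–$t$ path with interior in $A$, and a shortest $s$–$t$ path with interior in $B$. Their union is a cycle of length at least $4$. It has no chords between $A$ and $B$, because $S$ separates them, and no chords inside either path, because the paths are shortest. The only possible chord is therefore $st$, so chordality forces $s\sim t$.

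Next apply induction to the chordal graph $\cG[A\cup S]$. Either it is complete, in which case any vertex of $A$ is simplicial in it, or it has two non-adjacent simplicial vertices. In the latter case they cannot both lie in the clique $S$, so at least one of them lies in $A$. A vertex of $A$ has all its $\cG$-neighbours inside $A\cup S$, so it is simplicial in $\cG$ as well. The same argument on the $B$ side gives a simplicial vertex in $B$. Vertices in $A$ and in $B$ are non-adjacent, which establishes the stronger statement and completes the proof.
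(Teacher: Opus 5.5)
Your proof is correct. The paper gives no proof of this lemma; it quotes it as a well-known fact, namely the characterization of chordal graphs by perfect elimination orderings, read in reverse. So there is no argument in the paper to compare against, and what you give is the standard textbook proof via Dirac's lemma on simplicial vertices.

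The easy direction is handled correctly. So is the fact that a minimal $a$--$b$ separator is a clique. Your strengthened induction hypothesis (complete, or two non-adjacent simplicial vertices) is genuinely needed and used correctly. It forces a simplicial vertex of $\cG[A\cup S]$ to lie in $A$ rather than in $S$, where simpliciality in $\cG$ could fail because of neighbours in $B$.

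One wording point: in the clique claim, assume $s\not\sim t$ at the outset, or require the paths to have nonempty interior. Otherwise the shortest $s$--$t$ path with interior in $A$ could be the edge $st$ itself, and the union would not be a cycle of length at least $4$. This is cosmetic.

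Compared with the paper's bare citation, your argument makes the note self-contained. It also isolates exactly what the proof of Theorem~\ref{thm:stabchord} uses: the last-added vertex $v_0$ is simplicial, and $\cG - v_0$ is again chordal.
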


\section*{Results}

We now prove the following theorem about stability of graph product $C^*$-algebras, where we recall the well-known fact that every unital separable commutative $C^*$-algebra arises as the complex-valued functions on a compact Hausdorff space.  The proof essentially follows \cite[Theorem~2.9]{At18} (and \cite[Theorem~2.7]{HS16}, which in graph product language is the case of the graph with two vertices connected by an edge), though we are able to generalize their proofs to chordal graphs, as well as simplify it. We also note that we pay special attention to extending the $^*$-homomorphisms to the corresponding GNS von Neumann algebras when necessary to deal carefully with the projections that show up in the proof.

\begin{theorem}\label{thm:stabchord}
	Let $\cG=(V,E)$ be a chordal graph, and for every vertex $v\in V$, let $A_v=C(X_v)$ be a unital separable commutative $C^*$-algebra. Then the graph product $C^*$-algebra $\Asterisk_\cG A_v$ is HS-stable.
\end{theorem}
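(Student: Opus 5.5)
We will argue by induction on the number of vertices, following the inductive construction of chordal graphs in Lemma~\ref{lem:chordal}. A single vertex gives $C(X)$, which is HS-stable: a $^*$-homomorphism $C(X)\to\prod_\cU M_{k_i}$ lands in an abelian von Neumann subalgebra, so it is approximately diagonalizable, and its spectral data can be pushed to points of $X$. (Alternatively, this is the case already handled in \cite{HS16}.)

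For the inductive step, write $\cG=\cG_n$, let $w$ be the newly added vertex, and let $K\subseteq V\setminus\{w\}$ be the complete subgraph to which $w$ is attached. Then $\Asterisk_\cG A_v$ is the amalgamated product of $B=\Asterisk_{\cG_{n-1}}A_v$ and $C(X_w)\otimes \bigotimes_{v\in K}A_v\cong C(X_w\times\prod_{v\in K}X_v)$ over $D=\bigotimes_{v\in K}A_v\cong C(\prod_{v\in K}X_v)$. Here $D$ is commutative because $K$ is complete.

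Now let $\theta:\Asterisk_\cG A_v\to\prod_\cU M_{k_i}$ be given. By induction, the restriction $\theta|_B$ lifts to $^*$-homomorphisms $\sigma_i:B\to M_{k_i}$. To apply Lemma~\ref{lem:HS}, we take generators from $B$ and from $A_w$. Fix $\eps>0$ and finitely many generators. Choose a finite partition of $Y=\prod_{v\in K}X_v$ into Borel sets $Y_1,\dots,Y_m$ of small diameter with respect to the relevant functions. Extend $\theta$ to the GNS von Neumann algebra $\theta(\cdot)''$ in the tracial ultraproduct, and let $P_j=\chi_{Y_j}(\theta|_D)$ be the spectral projections of the commutative algebra $\theta(D)$. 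These commute with $\theta(A_w)$. Choose the partition so that the $P_j$ are approximated in $\norm{\cdot}_2$ by projections in the $C^*$-algebra, for instance by avoiding atoms on the boundaries. Then lift the $P_j$ to orthogonal projections $p_j^i=\chi_{Y_j}(\sigma_i|_D)$ in $M_{k_i}$. These have the correct traces asymptotically, because the spectral measures of $\sigma_i|_D$ converge to that of $\theta|_D$ after perturbing the partition to have null boundaries.

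Next we define $\pi_i$ on $A_w$. The element $P_j\theta(\cdot)P_j$ restricted to $A_w$ is a $^*$-homomorphism $C(X_w)\to P_j(\prod_\cU M_{k_i})P_j\cong\prod_\cU p_j^iM_{k_i}p_j^i$, which is a tracial ultraproduct of matrix algebras. By the base case it lifts to $\pi_{i,j}:C(X_w)\to p_j^iM_{k_i}p_j^i$, and we set $\pi_i=\sum_j\pi_{i,j}$. For $d\in D$, the element $\sigma_i(d)$ is close in operator norm to $\sum_j d(y_j)p_j^i$, which commutes with $\pi_i(A_w)$. To obtain exact commutation, we replace $\sigma_i|_D$ by $d\mapsto\sum_j d(y_j)p_j^i$. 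This would break the relations inside $B$, so we must handle it differently.

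The better route is to modify $\sigma_i$ by perturbing to an exactly commuting representation. Use that $\sigma_i(D)$ is finite-dimensional commutative, with minimal projections $q_\ell$. Refine the partition to be the spectral partition of $\sigma_i|_D$ itself, grouped according to the $Y_j$. Then define $\pi_i$ blockwise under each $q_\ell$, compressing $\pi_{i,j}$ further by cutting $C(X_w)$ into the blocks $q_\ell$. This requires lifting the compression of $\theta|_{A_w}$ to the finer projections $(q_\ell)$, whose images in the ultraproduct are controlled only in aggregate. The main obstacle is exactly this step: getting exact commutation with all of $\sigma_i(D)$ while keeping $\norm{\pi_i(f)-\theta(f)}_2$ small. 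I would handle it by conditioning. Let $E$ denote the conditional expectation onto $\theta(D)''$, a diffuse-or-atomic abelian algebra. Model $\theta$ restricted to $C(X_w\times Y)$ as a measure on $X_w\times Y$, and push it to matrices by matching the discretized measure of $\sigma_i|_D$ and choosing, within each $q_\ell$, eigenvalues for $A_w$ distributed according to the conditional measure on $X_w$ over $Y_j$. Since $\sigma_i$ and $\theta$ agree in $\norm{\cdot}_2$ on $D$, a unitary conjugation in $\prod_\cU M_{k_i}$ commuting with $\theta(B)$ up to $\eps$ then identifies the two, which gives condition~(ii) of Lemma~\ref{lem:HS}.
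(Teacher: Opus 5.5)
There is a genuine gap, and it sits exactly at the step you call the main obstacle. Your final paragraph never produces a representation of $A_w$ that commutes exactly with the lifted neighbor algebras while staying $\|\cdot\|_2$-close to $\theta$ on $A_w$.

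Choosing eigenvalues under each minimal projection $q_\ell$ of $\sigma_i(D)$ according to conditional measures only matches the joint distribution of $(\theta|_{A_w},\theta|_D)$. In the ultraproduct this gives a representation that is unitarily conjugate to $\theta$ on $C(X_w\times Y)$, but not close to it, so you must conjugate back, and neither way of doing so works:
\begin{itemize}
\item Conjugating only the $A_w$-part by lifts $u_i$ destroys exact commutation with $\sigma_i(D)$. A unitary in $\theta(D)'$ need not lift to unitaries in $\sigma_i(D)'$. For example, if $\sigma_i(f)=\mathrm{diag}(f(1/k),\dots,f(k/k))$, the cyclic shift asymptotically commutes with $\sigma_i(D)$ but is at $\|\cdot\|_2$-distance $\sqrt2$ from every diagonal unitary.
\item Conjugating all of $\sigma_i$ requires $u$ to almost commute with $\theta(A_u)$ for vertices $u$ not adjacent to $w$. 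The distribution on $X_w\times Y$ carries no information about those vertices.
\end{itemize}
So the unitary ``commuting with $\theta(B)$ up to $\eps$'' is not available. The finer projections $q_\ell$ also have no counterpart in the ultraproduct, as you note yourself.

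The missing idea is that you do not need to keep $\sigma_i$ on the neighbors. Lemma~\ref{lem:HS}(ii) allows an $\eps$-error there, so you may replace the neighbor representations by discretized ones, provided the discretization respects all relations of $\cG_{n-1}$. You rejected exactly this step. Your diagnosis is right for a joint partition of $Y=\prod_{v\in K}X_v$: then $\chi_{Y_j}(\sigma_i|_D)$ involves several $A_{v'}$ and need not commute with $\sigma_i(A_u)$ when $u$ is adjacent to only part of $K$.

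It works if you instead partition each $X_v$, $v\in K$, separately into small sets $E_{v,d}$ with null boundaries for $\tau_\cU\circ\theta|_{A_v}$. Then:
\begin{itemize}
\item $p^i_{v,d}=\chi_{E_{v,d}}(\sigma_i|_{A_v})$ lies in the finite-dimensional algebra $\sigma_i(A_v)$.
\item Your portmanteau argument gives $(p^i_{v,d})_\cU=\theta(\chi_{E_{v,d}})$.
\item $a\mapsto\sum_d a(x_{v,d})p^i_{v,d}$ is a $^*$-homomorphism $A_v\to\sigma_i(A_v)$, so every relation of $B$ survives.
\item Your blockwise lift $\pi_i=\sum_j\pi_{i,j}$ along the joint partition generated by the $p^i_{v,d}$ commutes exactly with these discretized maps.
\end{itemize}
The resulting errors are $0$ on $A_w$ and on non-neighbors, and less than $\eps$ on the neighbor generators.

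The paper reaches the same configuration differently. It applies the induction hypothesis not to $\theta|_B$ but to the graph product over $\cG'$ in which each neighbor algebra is replaced by the finite-dimensional $C^*(\chi_{E_{v,1}},\dots,\chi_{E_{v,m_v}})\subset\pi_{\tau_v}(A_v)''$. This directly yields exact projections $P^i_{v,d}$ that lift $\theta(\chi_{E_{v,d}})$ and satisfy all $\cG'$-relations, with no null-boundary argument needed.
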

\begin{proof}
We proceed by induction on the number of vertices $\abs{V}$ in $\cG$. If $\abs{V}=1$, the result follows from \cite[Theorem~2.5]{HS16}. Suppose $\abs{V}\geq 2$, and construct $\cG$ inductively as in Lemma~\ref{lem:chordal}.

Assume $\theta:\Asterisk_\cG A_v\to \prod_\cU M_i$ is a unital $^*$-homomorphism, where each $M_i$ is a matrix algebra equipped with its canonical normalized trace. Note that $\Asterisk_\cG A_v$ is generated as a $C^*$-algebra by (countably many) contractions $b\in A_v$ for $v\in V$, where we view $A_v \simeq \iota_v(A_v)$ as a subalgebra of $\Asterisk_\cG A_v$. Fix $\eps>0$ and contractions $b_1, \ldots, b_n$ with $b_k\in A_{v_k}$ for some $v_k\in V$. By Lemma~\ref{lem:HS}, it suffices to find $^*$-homomorphisms $\theta_i:\Asterisk_\cG A_v\to M_i$ such that
\begin{equation}\label{eq:goal}
\norm{(\theta_i(b_k))_\cU - \theta(b_k)}_{2,\cU} < \eps,
\end{equation}
for every $1\leq k\leq n$. 

Let $v_0$ denote the vertex which was added during the last step of the inductive construction of $\cG$ from Lemma~\ref{lem:chordal}, and let $\cG_0=(V_0,E_0)$ denote the complete subgraph consisting of the neighbors of $v_0$. In particular, $\Asterisk_{\cG_0} A_v = \ot_{v\in V_0} A_v$ is a commutative sub-$C^*$-algebra of $\Asterisk_\cG A_v$, and thus the image $\theta(\Asterisk_{\cG_0} A_v)$ is commutative. 

For each $v\in V_0$, we list the elements $a_{v,1}, \ldots, a_{v,n_v}$ among the $b_k$'s that belong to $A_v$. Being in commutative $C^*$-algebras, we can approximate the elements $a_{v,1}, \ldots, a_{v,n_v}$ for $v\in V_0$ (viewed as functions on $X_v$) by simple functions: there exist Borel partitions $\{E_{v,1}, \ldots, E_{v,m_v}\}$ of $X_v$ and points $x_{v,d}\in E_{v,d}$ such that
\[
\norm{a_{v,l} - \sum_{d=1}^{m_v} a_{v,l}(x_{v,d})\chi_{E_{v,d}}} < \eps,
\]
for all $v\in V_0$ and $1\leq l\leq n_v$, where $\norm{\cdot}$ denotes the uniform norm.

We observe that all our $C^*$-algebras come equipped with a canonical tracial state, namely the usual normalized traces $\tau_i$ on the matrix algebras $M_i$, $\tau_\cU \coloneqq \lim_\cU \tau_i$ on $\prod_\cU M_i$, and the pullback traces $\tau_\cG\coloneqq \tau_\cU\circ\theta$ on $\Asterisk_\cG A_v$ and $\tau_v\coloneqq \tau_\cU\circ\theta|_{A_v}$ on $A_v$ for each $v\in V$. 

By construction, the $^*$-homomorphism $\theta$ is trace-preserving, and thus extends to the von Neumann algebra completions $\pi_{\tau_v}(A_v)''$ in the GNS-construction for the corresponding tracial states. We also note that the tracial ultraproduct $\prod_\cU (M_i,\tau_i)$ is a von Neumann algebra itself, and thus we moreover have that
\[
\theta(\pi_{\tau_v}(A_v)'')\subset \prod_\cU (M_i,\tau_i).
\]
Since we can view $\chi_{E_{v,d}}\in \pi_{\tau_v}(A_v)''$, we can thus consider the corresponding projections 
\[
\theta(\chi_{E_{v,d}})\eqqcolon Q_{v,d}\in \prod_\cU (M_i,\tau_i),
\]
which, when ranging over $d$, form a partition of unity for every $v\in V_0$. Moreover, since they are all mutually commuting, we can consider the partition of unity $Q_1, \ldots, Q_r$, generated by all projections $Q_{v,d}$, for $v\in V_0$, $1\leq d\leq m_v$.

Next, denote by $\cG'=(V',E')$ the induced subgraph of $\cG$ obtained by removing $v_0$. Consider the graph product $\Asterisk_{\cG'} B_v$, where for each $v\in V'=V\setminus\{v_0\}$ we define
\begin{align*}
	B_v = \begin{cases}
		A_v &\text{if}\;\; v\notin V_0,\\
		C^*(\chi_{E_{v,1}},\ldots, \chi_{E_{v,m_v}}) &\text{if}\;\; v\in V_0.
	\end{cases}
\end{align*}
Define $\rho:\Asterisk_{\cG'} B_v\to \prod_\cU M_i$ by $\rho|_{B_v} = \theta|_{A_v}$ when $v\notin V_0$, and $\rho|_{B_v} = \theta|_{C^*(\chi_{E_{v,1}},\ldots, \chi_{E_{v,m_v}})}$ if $v\in V_0$, where we again assume that we extended $\theta$ to $\pi_{\tau_v}(A_v)''\supset C^*(\chi_{E_{v,1}},\ldots, \chi_{E_{v,m_v}})$.

Note that $\rho$ is well-defined, since by construction $\{\rho|_{B_v}\}_{v\in V'}$ satisfies the required commutation relations imposed by the graph (essentially since $\theta$ does), and thus the universal property from Definition~\ref{def:Cgraphprod} yields the desired $\rho$. Next, we observe that $\cG'$ is chordal, and thus by the induction hypothesis, $\Asterisk_{\cG'} B_v$ is HS-stable. In particular, we can lift $\rho$ to unital $^*$-homomorphisms $\rho_i:\Asterisk_{\cG'} B_v\to M_i$. 

Let $\rho_{v,i}\coloneqq \rho_i|_{B_v}$, and for each $v\in V_0$, $1\leq d\leq m_v$, and $i\in I$, let $P_{v,d}^i\coloneqq \rho_i(\chi_{E_{v,d}})$. Then all the $P_{v,d}^i$ mutually commute, and $\{P_{v,d}^i\}_{d=1}^{m_v}$ is a partition of unity in $M_i$ for every $v\in V_0$ and $i\in I$. Moreover, by construction, $Q_{v,d} = (P_{v,d}^i)_{i\in\cU}$ for every $v\in V_0$, $1\leq d\leq m_v$. Denote by $P_{i,1},\ldots, P_{i,r_i}$ the partition of unity of $M_i$ generated by all $P_{v,d}^i$. We note that we can w.l.o.g.\ assume that $r_i=r$ for all $i$. Then, for every $i\in I$, we can order the $P_{i,j}$ such that for all $1\leq j\leq r$, we have $Q_j=(P_{i,j})_\cU$.

Next, since $\Asterisk_{\cG_0} A_v$ is commutative and commutes with $A_{v_0}$, we observe that
\[
C^*(Q_1, \ldots, Q_r) = C^*\big(\cup_{v\in V_0} C^*(Q_{v,1},\ldots, Q_{v,m_v})\big) \subset \theta(C^*(A_{v_0},\Asterisk_{\cG_0} A_v))'\cap \prod_\cU M_i.
\]
Hence, by taking relative commutants,
\begin{align*}
    \theta(C^*(A_{v_0},\Asterisk_{\cG_0} A_v)) &\subset C^*(Q_1, \ldots, Q_r)'\cap \prod_\cU M_i \\
    &= \prod_\cU \big(\bigoplus_{j=1}^r P_{i,j}M_iP_{i,j}\big) \\
    &= \bigoplus_{j=1}^r \big(\prod_\cU P_{i,j}M_iP_{i,j}\big).
\end{align*}
Denote by $\vphi_j$ the orthogonal projection onto the summand $\prod_\cU P_{i,j}M_iP_{i,j}$ of $\bigoplus_{j=1}^r \prod_\cU P_{i,j}M_iP_{i,j}$. Since $A_{v_0}$ is commutative and hence HS-stable, we can lift $\vphi_j\circ\theta: A_{v_0}\to \prod_\cU P_{i,j}M_iP_{i,j}$ for each $1\leq j\leq r$ to $^*$-homomorphisms
\[
\theta_{v_0,i,j}: A_{v_0}\to P_{i,j}M_iP_{i,j}.
\]
We can thus define $\theta_{v_0,i}\coloneqq \bigoplus_{j=1}^r \theta_{v_0,i,j}$, and note that for every $a\in A_{v_0}$ we have
\begin{equation}\label{eq:0}
	(\theta_{v_0,i}(a))_\cU = \theta(a).
\end{equation}
For $v\in V_0$ and $a\in A_v$ we next define
\[
\theta_{v,i}(a) = \sum_{j=1}^r a(x_{v,d_j})P_{i,j},
\]
where $d_j$ is such that $P_{i,j}\leq P_{v,d_j}^i$, which is well-defined since $(P_{i,j})_j$ is by construction a finer partition than $(P_{v,d}^i)_d$. We note that by construction, we have for every $v\in V_0$ and $1\leq l\leq n_v$
\begin{align}
	\begin{split}\label{eq:1}
	\norm{(\theta_{v,i}(a_{v,l}))_\cU - \theta(a_{v,l})}_{2,\cU} &= \norm{\big(\sum_{j=1}^r a_{v,l}(x_{v,d_j})P_{i,j}\big)_\cU - \theta(a_{v,l})}_{2,\cU}\\
	&= \norm{\big(\sum_{d=1}^{m_v} a_{v,l}(x_{v,d})P_{v,d}^i\big)_\cU - \theta(a_{v,l})}_{2,\cU}\\
	&= \norm{\big(\sum_{d=1}^{m_v} a_{v,l}(x_{v,d})\rho_i(\chi_{E_{v,d}})\big)_\cU - \theta(a_{v,l})}_{2,\cU}\\
	&= \norm{\theta\big(\sum_{d=1}^{m_v} a_{v,l}(x_{v,d})\chi_{E_{v,d}}\big) - \theta(a_{v,l})}_{2,\cU}\\
	&\leq \norm{\sum_{d=1}^{m_v} a_{v,l}(x_{v,d})\chi_{E_{v,d}} - a_{v,l}}_{2,\cU}\\
	&< \eps.
	\end{split}
\end{align}
Finally, for $v\notin V_0\cup\{v_0\}$, we let $\theta_{v,i}\coloneqq \rho_{v,i}$, and note that since $(\rho_i)_{i\in I}$ was a lift of $\rho$, which equals $\theta$ on $A_v$ for $v\notin V_0\cup\{v_0\}$, we get for every $v\notin V_0\cup\{v_0\}$ and $a\in A_v$ that
\begin{equation}\label{eq:2}
(\theta_{v,i}(a))_\cU = \theta(a).
\end{equation}
Since by construction, $\{\theta_{v,i}\}_{v\in V}$ satisfies the necessary commutation relations for every $i\in I$, we can define the $^*$-homomorphisms
\[
\theta_i\coloneqq \Asterisk_\cG\, \theta_{v,i}:\Asterisk_\cG A_v\to M_i,
\]
for $i\in I$. From \eqref{eq:0}, \eqref{eq:1}, and \eqref{eq:2} we can now immediately conclude that
\begin{align*}
	\norm{(\theta_i(b_k))_\cU - \theta(b_k)}_{2,\cU} = \norm{(\theta_{v_k,i}(b_k))_\cU - \theta(b_k)}_{2,\cU} <\eps.
\end{align*}
for every $1\leq k\leq n$. Hence the $\theta_i$ satisfy \eqref{eq:goal}, which finishes the proof.
\end{proof}

\textbf{Remarks.}
\begin{enumerate}[label=(\roman*), align=left, leftmargin=*, labelsep=-5pt]
	\item Instead of matrix algebras, we could take the $M_i$ in the proof to belong to any class $\cC$ of unital real rank 0 $C^*$-algebras closed under direct sums and unital corners, and equipped with tracial states (cf.\ \cite{At18,HS16}). Hence for such classes $\cC$, Theorem~\ref{thm:stabchord} also implies $\cC$-tracial stability of graph products of commutative $C^*$-algebras on chordal graphs.
	\item In the proof, we only used the assumption of commutativity for the $C^*$-algebras at the neighboring vertices of the added vertex, while the $C^*$-algebra at this added vertex just needed to be HS-stable. Hence, the proof in fact implies the following more general result:
\end{enumerate}

\begin{theorem}\label{thm:general}
    A graph product of $C^*$-algebras on a chordal graph $\cG$ is HS-stable (respectively $\cC$-tracially stable) if every vertex $C^*$-algebra belonging to a vertex which appears in a complete subgraph from step~(2) of the inductive construction from Lemma~\ref{lem:chordal} of $\cG$ is commutative, and every other vertex $C^*$-algebra is HS-stable (respectively $\cC$-tracially stable).
\end{theorem}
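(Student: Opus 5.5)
We prove Theorem~\ref{thm:general} by rerunning the proof of Theorem~\ref{thm:stabchord}, checking at each step which hypothesis on the vertex algebras is actually used. Fix an inductive construction $\cG_1\subset\cG_2\subset\cdots\subset\cG_n=\cG$ as in Lemma~\ref{lem:chordal}. Let $S\subset V$ be the set of vertices that lie in some complete subgraph chosen in step~(2). By hypothesis $A_v$ is commutative for $v\in S$ and HS-stable for $v\notin S$. We induct on $k$ and show that $\Asterisk_{\cG_k} A_v$ is HS-stable for every $k$. The key observation is that the induced construction of $\cG_{k-1}$ (the first $k-1$ steps) is again a valid construction in the sense of Lemma~\ref{lem:chordal}. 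Its chosen complete subgraphs are a subset of those for $\cG_k$. Hence every vertex of $\cG_{k-1}$ lying in one of its chosen cliques is in $S$, and every other vertex algebra is either HS-stable or commutative. Commutative separable $C^*$-algebras are HS-stable by \cite[Theorem~2.5]{HS16}, so the hypothesis passes to $\cG_{k-1}$. The base case $k=1$ is exactly HS-stability of the single vertex algebra $A_{v}$, which holds in either case.

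For the inductive step, let $v_0$ be the vertex added last and $\cG_0$ its chosen clique, so $V_0\subset S$ and each $A_v$ with $v\in V_0$ is commutative. Then $\Asterisk_{\cG_0}A_v=\ot_{v\in V_0}A_v$ is commutative. For $v\in V_0$ we approximate the finitely many relevant generators by simple functions and extend $\theta$ to the GNS von Neumann algebras $\pi_{\tau_v}(A_v)''$ to obtain commuting projections $Q_{v,d}$. We form the graph product $\Asterisk_{\cG'}B_v$ with $\cG'=\cG_{k-1}$, where $B_v$ is finite-dimensional commutative for $v\in V_0$ and $B_v=A_v$ otherwise. This graph product again satisfies the hypotheses: the replaced algebras are commutative, and the rest are unchanged. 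So by induction the associated map $\rho$ lifts to $\rho_i$, and we build the refined partitions $P_{i,j}$ with $(P_{i,j})_\cU=Q_j$.

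The one place where Theorem~\ref{thm:stabchord} used commutativity of $A_{v_0}$ is in lifting $\vphi_j\circ\theta|_{A_{v_0}}$ into the corners $\prod_\cU P_{i,j}M_iP_{i,j}$. Here we only need $A_{v_0}$ to be HS-stable, since the corners are again matrix algebras, or members of $\cC$ in the $\cC$-tracially stable setting, using closure of $\cC$ under unital corners and direct sums. One point needs care when $A_{v_0}$ is not commutative. The inclusion $\theta(A_{v_0})\subset C^*(Q_1,\dots,Q_r)'$ must still hold, and it does because each $Q_{v,d}$ lies in the von Neumann algebra generated by $\theta(A_v)$, which commutes with $\theta(A_{v_0})$. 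Note also that $v_0$ is possibly in $S$, in which case $A_{v_0}$ is commutative and hence HS-stable anyway. The maps $\theta_{v,i}$ for $v\in V_0$ are defined by the same simple-function formula. For $v\notin V_0\cup\{v_0\}$ we take $\theta_{v,i}=\rho_{v,i}$. These maps satisfy the graph relations:
\begin{itemize}
\item the images for $v\in V_0$ lie in $C^*(P_{i,j})$, which commutes with $\theta_{v_0,i}$ by block-diagonality;
\item the $\rho_{v,i}$ satisfy the $\cG'$ relations, and $C^*(P_{i,j})\subset \rho_i(\ot_{v\in V_0}B_v)$.
\end{itemize}
The estimates \eqref{eq:0}--\eqref{eq:2} then go through verbatim, and Lemma~\ref{lem:HS} finishes the proof.

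The main obstacle is bookkeeping rather than analysis. We must ensure the hypothesis of Theorem~\ref{thm:general} is inherited by $\cG'$ with the modified $B_v$. We also need the GNS extension and the inclusion $\theta(A_{v_0})\subset C^*(Q_j)'$ to remain valid when $A_{v_0}$ is noncommutative. Both are handled by the observations above.
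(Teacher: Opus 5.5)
Your proposal is correct and is the paper's own argument: the paper obtains Theorem~\ref{thm:general} (Remark~(ii)) by rerunning the proof of Theorem~\ref{thm:stabchord}, observing that commutativity is needed only on the neighbours $V_0$ of the last vertex and that $A_{v_0}$ only has to be lifted into the corners $P_{i,j}M_iP_{i,j}$, and your extra bookkeeping (the hypothesis passing to $\cG'$ with the $B_v$, and $Q_{v,d}$ lying in the von Neumann algebra generated by $\theta(A_v)$) is sound. One small correction: to show that $\theta_{v,i}$ for $v\in V_0$ commutes with $\rho_{w,i}$ for a neighbour $w\notin V_0\cup\{v_0\}$ of $v$, the inclusion $C^*(P_{i,j})\subset\rho_i(\ot_{u\in V_0}B_u)$ is not enough, since such $w$ need not be adjacent to all of $V_0$; use instead $\theta_{v,i}(a)=\sum_d a(x_{v,d})P^i_{v,d}=\rho_i\big(\sum_d a(x_{v,d})\chi_{E_{v,d}}\big)\in\rho_i(B_v)$.
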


By passing to the full group $C^*$-algebra, Theorem~\ref{thm:stabchord} immediately implies the following for graph products of countable groups. Note that the foregoing remarks also apply in the group setting.

\begin{corollary}\label{cor:stabchordgrp}
	Let $\cG=(V,E)$ be a chordal graph, and for every vertex $v\in V$, let $\Gamma_v$ be a countable abelian group. Then the graph product group $\Asterisk_\cG\, \Gamma_v$ is HS-stable. In particular, right-angled Artin groups and right-angled Coxeter groups on chordal graphs are HS-stable. 
\end{corollary}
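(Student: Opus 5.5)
The corollary will follow from Theorem~\ref{thm:stabchord} once we pass to full group $C^*$-algebras. Concretely, I will establish the natural isomorphism
\[
C^*\big(\Asterisk_\cG\, \Gamma_v\big)\cong \Asterisk_\cG\, C^*(\Gamma_v),
\]
then observe that each $C^*(\Gamma_v)$ is unital, separable and commutative, and finally invoke the stated equivalence between HS-stability of a group and HS-stability of its full group $C^*$-algebra.

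For the isomorphism, I check that both sides satisfy the same universal property. A unital $^*$-homomorphism out of $C^*(\Asterisk_\cG\,\Gamma_v)$ into a unital $C^*$-algebra $B$ is the same thing as a unitary representation of $\Asterisk_\cG\,\Gamma_v$ in $U(B)$. By the defining presentation of the graph product group, such a representation is the same as a family of unitary representations $\Gamma_v\to U(B)$ whose images commute whenever $(v,w)\in E$. Each such representation corresponds to a unital $^*$-homomorphism $\pi_v:C^*(\Gamma_v)\to B$. Moreover, the commutation of the images of $\Gamma_v$ and $\Gamma_w$ is equivalent to $[\pi_v(a),\pi_w(b)]=0$ for all $a,b$, because group elements span dense subalgebras. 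This is exactly the universal property in Definition~\ref{def:Cgraphprod}, and uniqueness of universal objects gives the isomorphism, with $\iota_v$ induced by the inclusions $\Gamma_v\hookrightarrow \Asterisk_\cG\,\Gamma_v$. Since each $\Gamma_v$ is countable abelian, $C^*(\Gamma_v)$ is unital, separable and commutative, so $C^*(\Gamma_v)\cong C(\widehat{\Gamma_v})$ with $\widehat{\Gamma_v}$ the compact dual group. Theorem~\ref{thm:stabchord} therefore shows that $C^*(\Asterisk_\cG\,\Gamma_v)$ is HS-stable.

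It remains to pass from the $C^*$-algebra back to the group. The equivalence is noted in the preliminaries, but this is the one step where some care is needed, so I will sketch it. Suppose $\vphi_n:\Gamma\to U(k_n)$ is an approximate homomorphism. Fix a free ultrafilter $\cU$ on $\N$. Then $g\mapsto(\vphi_n(g))_\cU$ is a genuine homomorphism into the unitary group of $\prod_\cU M_{k_n}$, and it induces $\theta:C^*(\Gamma)\to\prod_\cU M_{k_n}$. Lifting $\theta$ to $^*$-homomorphisms $\theta_n$ gives homomorphisms $\psi_n=\theta_n|_\Gamma$, which are unital since $\theta$ is. These satisfy $\lim_{n\to\cU}\norm{\vphi_n(g)-\psi_n(g)}_2=0$ for each $g$. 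Because $\cU$ was an arbitrary free ultrafilter, the standard argument by contradiction upgrades this to an honest limit. Suppose it failed: then there are $\delta>0$ and an infinite set $S\subset\N$ on which, for every choice of homomorphisms $\psi_n$, the distance $\norm{\vphi_n(g)-\psi_n(g)}_2$ stays at least $\delta$ for some fixed $g$ from an enumeration of $\Gamma$. A diagonal argument over $\Gamma$ makes this precise, and choosing $\cU\ni S$ gives the contradiction.

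Finally, right-angled Artin and Coxeter groups are the special cases where every vertex group is $\Z$ or $\Z/2\Z$ respectively, both of which are countable abelian. I expect no real obstacle; the only mildly delicate points are the identification of universal objects and the ultrafilter-to-sequence upgrade, both of which are routine.
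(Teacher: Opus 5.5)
Your proposal is correct and follows the paper's route exactly: identify $C^*(\Asterisk_\cG\,\Gamma_v)$ with $\Asterisk_\cG\, C^*(\Gamma_v)$, note $C^*(\Gamma_v)\cong C(\widehat{\Gamma_v})$, apply Theorem~\ref{thm:stabchord}, and use the equivalence between HS-stability of $\Gamma$ and of $C^*(\Gamma)$, which the paper merely asserts and you usefully sketch. Two small fixes are needed. First, a lift of the unital $\theta$ only gives projections $p_n=\theta_n(1)$ with $\lim_\cU\norm{1-p_n}_2=0$, not $p_n=1$, so take $\psi_n(g)=\theta_n(g)+(1-p_n)$ to land in $U(k_n)$. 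Second, for the upgrade to an honest limit, run the contradiction with $\delta_n=\inf_\psi\sum_j 2^{-j}\norm{\vphi_n(g_j)-\psi(g_j)}_2$, the infimum taken over homomorphisms $\psi:\Gamma\to U(k_n)$, which puts the quantifiers in the right order.
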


\begin{remark}
    It is known that a right-angled Coxeter group is virtually free if and only if its graph is chordal (the author would like to thank Francesco Fournier-Facio for pointing this out). Therefore, the right-angled Coxeter case of Corollary~\ref{cor:stabchordgrp} also follows from \cite{GS21}, albeit with a completely different proof. 
\end{remark}

\begin{remark}
    We note that not all right-angled Artin groups are HS-stable. Indeed, Ioana showed in \cite{Io21} that the product of free groups $\bF_2\times\bF_2$, which is the right-angled Artin group on the square, is not HS-stable. In particular, since it is easy to check that retracts of stable groups are stable, right-angled Artin groups on graphs which have an induced subgraph equal to the square are not HS-stable. Whether right-angled Artin groups on cycles of length 5 or more are HS-stable seems to be open (and a resolution of this question would likely lead to a classification of HS-stable right-angled Artin groups).\\
    In the Coxeter case, it is known that the right-angled Coxeter group on the square (which is $D_\infty\times D_\infty$, where $D_\infty$ is the infinite dihedral group) is HS-stable. This follows for instance from the facts that $D_\infty\cong \Z/2\Z*\Z/2\Z$, finite groups are HS-stable, and that HS-stability is closed under free products as well as under the direct product with an amenable HS-stable group \cite[Corollary~D]{IS19}. On the other hand, whether for instance the right-angled Coxeter groups on the pentagon and on $K_{3,3}$ are HS-stable seems to be open. Note that the right-angled Coxeter group on $K_{3,3}$ is a direct product of two virtually free groups.
\end{remark}

\subsection*{Acknowledgments} The results in this note were obtained after some fruitful discussions of the author with Adrian Ioana and Scott Atkinson on related topics, for which the author would like to thank them. Additionally, the author would like to thank Francesco Fournier-Facio and Rufus Willett for sharing their preprint \cite{FW26} and for some comments on an earlier draft of this note.

\end{document}